\newcommand{\sstirling}[2]{\genfrac\{\}{0pt}{}{#1}{#2}}
\newcommand{\rbstirling}[2]{\genfrac\langle\rangle{0pt}{}{#1}{#2}}
\def\a{{\bf a}}
\theoremstyle{plain}
\newtheorem{theorem}{Theorem}
\newtheorem{corollary}[theorem]{Corollary}
\theoremstyle{definition}
\theoremstyle{remark}
\newtheorem{remark}[theorem]{Remark}
\begin{document}

\begin{center}
\vskip 1cm{\LARGE\bf Bivariate Extension of the $r$-Dowling Polynomials and the Generalized Spivey's Formula}
\vskip 1cm
\large    
Mahid M. Mangontarum\\
Department of Mathematics\\
Mindanao State University-Main Campus\\
Marawi City 9700\\
Philippines \\
\href{mailto:mmangontarum@yahoo.com}{\tt mmangontarum@yahoo.com} \\
\href{mailto:mangontarum.mahid@msumain.edu.ph}{\tt mangontarum.mahid@msumain.edu.ph}\end{center}

\vskip .2 in

\begin{abstract}
In this paper, we extend the $r$-Dowling polynomials to their bivariate forms. Several properties that generalize those of the bivariate Bell and $r$-Bell polynomials are established. Finally, we obtain two forms of generalized Spivey's formula.
\end{abstract}

\section{Introduction}

The Bell numbers $B_n$ are defined by the sum
\begin{equation}
B_n=\sum_{k=0}^n\sstirling{n}{k},\label{b1}
\end{equation}
where $\sstirling{n}{k}$ denote the Stirling numbers of the second kind, and are known to satisfy the recurrence relation given by
\begin{equation}
B_{n+1}=\sum_{k=0}^n\binom{n}{k}B_k.\label{b2}
\end{equation}
The numbers $\sstirling{n}{k}$ count the number of ways to partition a set $X$ of $n$ elements into $k$ non empty subsets. With this, it is obvious that $B_n$ count the total number of partitions of the set $X$. Using the same combinatorial interpretation, Spivey \cite{Spivey} obtained a generalized recurrence for $B_n$ which unifies \eqref{b1} and \eqref{b2}, viz.
\begin{equation}
B_{\ell+n}=\sum_{k=0}^{\ell}\sum_{i=0}^nk^{n-i}\binom{n}{i}\sstirling{\ell}{k}B_i.\label{spivey}
\end{equation}
The Bell polynomials, denoted by $B_n(x)$, are defined by
\begin{equation}
B_n(x)=\sum_{k=0}^n\sstirling{n}{k}x^k.\label{b1.1}
\end{equation}
Gould and Quaintance \cite{Gould} established the polynomial version of \eqref{spivey} as follows
\begin{equation}
B_{\ell+n}(x)=\sum_{k=0}^{\ell}\sum_{i=0}^nk^{n-i}\binom{n}{i}\sstirling{\ell}{k}B_i(x)x^j\label{xspivey}
\end{equation}
by means of generating functions. The same identity was also obtained by Belbachir and Mihoubi \cite[Theorem 1]{Belbachir} using a method that involve decomposition of the $B_n(x)$ into a certain polynomial basis and by Boyadzhiev \cite[Proposition 3.2]{Khristo} using the Mellin derivatives.

Recently, Zheng and Li \cite{Zheng} defined the bivariate Bell polynomials by
\begin{equation}
B_n(x,y)=\sum_{k=0}^n\sstirling{n}{k}(x)_ky^k,\label{b1.2}
\end{equation}
where $(x)_k=x(x-1)\cdots(x-k+1),\ (x)_0=1$, with the following exponential generating function \cite[Theorem 1]{Zheng}:
\begin{equation}
\sum_{n=0}^{\infty}B_n(x,y)\frac{t^n}{n!}=\left[1+y(e^t-1)\right]^x.\label{b3}
\end{equation}
With this notion, they were able to obtain the bivariate extension of Spivey's formula \cite[Theorem 2]{Zheng} given by
\begin{equation}
B_{\ell+n}(x,y)=\sum_{k=0}^{\ell}\sum_{i=0}^nk^{n-i}\binom{n}{i}\sstirling{\ell}{k}B_i(x-k,y)(x)_ky^k.\label{xyspivey}
\end{equation}
Equation \eqref{xspivey} can be recovered from this formula by replacing $y$ with $y/x$ and taking the limit as $x\rightarrow \infty$. The $r$-Stirling numbers of the second kind, denoted by $\sstirling{n}{k}_r$, are defined by Broder \cite{Bro} as the number of partitions of the $n$-element set $X$ into $k$ non empty disjoint subsets such that the elements $1,2,\ldots,r$ are in distinct subsets. These numbers are known to satisfy the horizontal generating function \cite[Theorem 22]{Bro}
\begin{equation}
(t+r)^n=\sum_{k=0}^n\sstirling{n+r}{k+r}_r(t)_k
\end{equation}
and the exponential generating function \cite[Theorem 16]{Bro}
\begin{equation}
\sum_{n=0}^{\infty}\sstirling{n+r}{k+r}_r\frac{t^n}{n!}=\frac{1}{k!}e^{rt}(e^t-1)^k.
\end{equation}
By defining the bivariate $r$-Bell polynomials by
\begin{equation}
B_{n,r}(x,y)=\sum_{k=0}^n\sstirling{n+r}{k+r}_r(x)_ky^k,\label{b4}
\end{equation}
Zheng and Li \cite{Zheng} were also able to obtain the following generalizations of \eqref{xyspivey}:
\begin{equation}
B_{\ell+n,r}(x,y)=\sum_{k=0}^{\ell}\sum_{i=0}^nk^{n-i}\binom{n}{i}\sstirling{\ell+r}{k+r}_rB_{i,r}(x-k,y)(x)_ky^k\label{xyspiveyr1}
\end{equation}
and
\begin{equation}
B_{\ell+n,r}(x,y)=\sum_{k=0}^{\ell}\sum_{i=0}^n(k+r)^{n-i}\binom{n}{i}\sstirling{\ell+r}{k+r}_rB_i(x-k,y)(x)_ky^k.\label{xyspiveyr2}
\end{equation}
Replacing $y$ with $1/x$ and taking the limit as $x\rightarrow \infty$ in these formulas give \cite[Corollaries 9 and 10]{Zheng}
\begin{equation}
B_{\ell+n,r}=\sum_{k=0}^{\ell}\sum_{i=0}^nk^{n-i}\binom{n}{i}\sstirling{\ell+r}{k+r}_rB_{i,r}\label{spiveyr1}
\end{equation}
and
\begin{equation}
B_{\ell+n,r}=\sum_{k=0}^{\ell}\sum_{i=0}^n(k+r)^{n-i}\binom{n}{i}\sstirling{\ell+r}{k+r}_rB_i.\label{spiveyr2}
\end{equation}
Equation \eqref{spiveyr2} is a generalization of \eqref{spivey} proved by Mez\H{o} \cite[Theorem 2]{Mez2} using the combinatorial interpretation of $\sstirling{n}{k}_r$ for the $r$-Bell numbers \cite[Equation 2]{Mez1}
\begin{equation}
B_{n,r}=\sum_{k=0}^n\sstirling{n+r}{k+r}_r.\label{rb1}
\end{equation}
Notice that we used the notation $\sstirling{\ell+r}{k+r}_r$ in the above equations instead of just $\sstirling{\ell}{k}_r$ for consistency. On the other hand, equation \eqref{spiveyr1} and its polynomial version \eqref{xyspiveyr1} appear in the paper of Mangontarum and Dibagulun \cite[Corollary 3]{Mahid} as particular case of the formula \cite[Theorem 2]{Mahid}
\begin{equation}
D_{m,r}(\ell+n;x)=\sum_{k=0}^{\ell}\sum_{i=0}^n(mk)^{n-i}W_{m,r}(n,k)\binom{n}{i}D_{m,r}(i;x)x^k,\label{DowlingSpivey1}
\end{equation}
where $D_{m,r}(\ell+n;x)$ denote the $r$-Dowling polynomials \cite{Cheon} defined by
\begin{equation}
D_{m,r}(n;x)=\sum_{k=0}^nW_{m,r}(n,k)x^k\label{rD1}
\end{equation}
and $W_{m,r}(n,k)$ denote the $r$-Whitney numbers of the second kind \cite{Mez3}. Equation \eqref{DowlingSpivey1} was proved using the classical operators $X$ and $D$ satisfying  the commutation relation 
\begin{equation*}
[D,x]:=DX-XD=1.
\end{equation*}
Inspecting equations \eqref{spiveyr1} and \eqref{spiveyr2}, we see that generalizing Spivey's formula yields two forms. In the first form, as seen in the right-hand side of \eqref{spiveyr1}, the $r$-Bell numbers $B_{\ell+n,r}$ are expressed recursively in terms of the $r$-Bell numbers $B_{i,r}$. In the second form, as seen in \eqref{spiveyr2}, the right-hand side involves the usual Bell numbers $B_i$ instead of $B_{i,r}$. We also notice the presence of $(k+r)^{n-i}$ instead of $k^{n-i}$.

In this paper, we will extend the $r$-Dowling polynomials to the bivariate case and investigate generalizations of Spivey's formula that are analogous to the two forms mentioned above.

\section{Bivariate $r$-Dowling polynomials}

The $r$-Whitney numbers of the second kind are defined as coefficients in the expansion of the horizontal generating function \cite{Mez3}
\begin{equation}
(mt+r)^n=\sum_{k=0}^nm^kW_{m,r}(n,k)(t)_k,
\end{equation}
and have the exponential generating function
\begin{equation}
\sum_{n=k}^{\infty}W_{m,r}(n,k)\frac{t^n}{n!}=\frac{e^{rt}}{k!}\left(\frac{e^{mt}-1}{m}\right)^k\label{rw1}
\end{equation}
and the explicit formula
\begin{equation}
W_{m,r}(n,k)=\frac{1}{m^kk!}\sum_{j=0}^k(-1)^{k-j}\binom{k}{j}(mj+r)^n.\label{rw2}
\end{equation}
Apparently, other mathematicians worked on numbers which are equivalent to $W_{m,r}(n,k)$. More precisely, the $(r,\beta)$-Stirling numbers \cite{Cor1} $\rbstirling{n}{k}_{r,\beta}$ defined by
\begin{equation*}
t^n=\sum_{k=0}^n\binom{\frac{t-r}{\beta}}{k}\beta^kk!\rbstirling{n}{k}_{r,\beta},
\end{equation*}
the Ruci\'{n}ski-Voigt numbers \cite{Rucin} $S^{n}_{k}(\a)$ defined by
\begin{equation*}
t^n=\sum_{k=0}^nS^{n}_{k}(\a)P^{\a}_{k}(x),
\end{equation*}
where $\a=(a,a+r,a+2r,a+3r,\ldots)$ and $P^{\a}_{k}(x)=\prod_{i=0}^{k-1}(t-a+ir)$, and the noncentral Whitney numbers of the second kind \cite{Mahid2} defined by
\begin{equation*}
\widetilde{W}_{m,a}(n,k)=\frac{1}{m^kk!}\left[\Delta^k(mt-a)^n\right]_{t=0},
\end{equation*}
can be expressed as
\begin{equation*}
\rbstirling{n}{k}_{r,\beta}=W_{\beta,r}(n,k),\ S^{n}_{k}(\a)=W_{r,a}(n,k),\ \widetilde{W}_{m,a}(n,k)=W_{m,-a}(n,k).
\end{equation*}
Furthermore, aside from the classical Stirling numbers of the second kind which are given by $W_{1,0}(n,k)=\sstirling{n}{k}$, the numbers considered by previous authors in \cite{Bro,Koutras,Benoumhani,Bel,Mahid3,Mahid4} can also be obtained from $W_{m,r}(n,k)$ by assigning suitable values to the parameters $m$ and $r$.

Now, looking at the defining relations in equations \eqref{b1.2} and \eqref{b4}, it is natural to define the bivariate $r$-Dowling polynomials by
\begin{equation}
D_{m,r}(n;x,y)=\sum_{k=0}^nW_{m,r}(n,k)(x)_ky^k.\label{xydowling1}
\end{equation}
In the following theorems, we will present some combinatorial properties of $D_{m,r}(n;x,y)$:

\begin{theorem}
The bivariate $r$-Dowling polynomials satisfy the following exponential generating function:
\begin{equation}
\sum_{n=0}^{\infty}D_{m,r}(n;x,y)\frac{t^n}{n!}=e^{rt}\left[1+\frac{y(e^{mt}-1)}{m}\right]^x.\label{res1}
\end{equation}
\end{theorem}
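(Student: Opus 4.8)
The plan is to substitute the definition \eqref{xydowling1} directly into the left-hand side and then interchange the order of summation so that the known exponential generating function \eqref{rw1} for the $r$-Whitney numbers can be applied. Concretely, I would begin by writing
\[
\sum_{n=0}^{\infty}D_{m,r}(n;x,y)\frac{t^n}{n!}
=\sum_{n=0}^{\infty}\sum_{k=0}^n W_{m,r}(n,k)(x)_k y^k\frac{t^n}{n!}.
\]
Since $W_{m,r}(n,k)=0$ whenever $k>n$, the inner sum may be extended to all $k$, and the double sum can be reindexed as a sum over $k$ of a sum over $n\ge k$. Pulling the factor $(x)_k y^k$ (which is independent of $n$) outside, I obtain
\[
\sum_{n=0}^{\infty}D_{m,r}(n;x,y)\frac{t^n}{n!}
=\sum_{k=0}^{\infty}(x)_k y^k\sum_{n=k}^{\infty}W_{m,r}(n,k)\frac{t^n}{n!}.
\]

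Next I would apply \eqref{rw1} to evaluate the inner sum as $\frac{e^{rt}}{k!}\left(\frac{e^{mt}-1}{m}\right)^k$. Factoring the common $e^{rt}$ out of the remaining sum over $k$ gives
\[
e^{rt}\sum_{k=0}^{\infty}\frac{(x)_k}{k!}\left(\frac{y(e^{mt}-1)}{m}\right)^k.
\]
Recognizing that $\frac{(x)_k}{k!}=\binom{x}{k}$, the surviving series is exactly the binomial expansion $\sum_{k\ge 0}\binom{x}{k}z^k=(1+z)^x$ with $z=\frac{y(e^{mt}-1)}{m}$, which yields the claimed right-hand side $e^{rt}\bigl[1+\frac{y(e^{mt}-1)}{m}\bigr]^x$.

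The only delicate point is the interchange of summation and the use of the binomial series, which I would justify in the ring of formal power series in $t$: each power $t^n$ receives contributions from only finitely many $k$ (namely $0\le k\le n$), so the rearrangement is legitimate coefficientwise, and the quantity $\frac{e^{mt}-1}{m}$ has no constant term, making $\bigl(\frac{y(e^{mt}-1)}{m}\bigr)^k$ of order at least $t^k$ and the generalized binomial series a well-defined formal power series even for non-integer $x$. Thus the main obstacle is purely a matter of bookkeeping to confirm convergence in the formal sense rather than any genuine analytic difficulty.
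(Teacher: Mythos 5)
Your proposal is correct and follows essentially the same route as the paper's own proof: substitute the definition \eqref{xydowling1}, interchange the order of summation, apply the $r$-Whitney exponential generating function \eqref{rw1}, and finish with the binomial theorem. Your added care about the interchange of sums and the formal-power-series meaning of the binomial expansion (the paper instead writes the $k$-sum as terminating at $x$, implicitly taking $x$ a nonnegative integer) is a minor refinement, not a different argument.
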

\begin{proof}
Making use of the exponential generating function in \eqref{rw1} and the binomial theorem, we get
\begin{eqnarray*}
\sum_{n=0}^{\infty}D_{m,r}(n;x,y)\frac{t^n}{n!}&=&\sum_{n=0}^{\infty}\left[\sum_{k=0}^nW_{m,r}(n,k)(x)_ky^k\right]\frac{t^n}{n!}\\
&=&\sum_{k=0}^{\infty}(x)_ky^k\frac{e^{rt}}{k!}\left(\frac{e^{mt}-1}{m}\right)^k\\
&=&e^{rt}\sum_{k=0}^x\binom{x}{k}\left[\frac{y(e^{mt}-1)}{m}\right]^k\\
&=&e^{rt}\left[1+\frac{y(e^{mt}-1)}{m}\right]^x
\end{eqnarray*}
as desired.
\end{proof}
The results in \eqref{b3} and in \cite[Theorem 3]{Zheng} are special cases of this theorem, i.e. when $m=1$ and $r=1$, and $m=1$, respectively.
\begin{theorem}
The bivariate $r$-Dowling polynomials satisfy the following explicit formula:
\begin{equation}
D_{m,r}(n;x,y)=\sum_{i=0}^x\binom{x}{i}(mi+r)^n\left(\frac{y}{m}\right)^i\left(1-\frac{y}{m}\right)^{x-i}.\label{res2}
\end{equation}
\end{theorem}
\begin{proof}
By applying the explicit formula in \eqref{rw2},
\begin{eqnarray*}
D_{m,r}(n;x,y)&=&\sum_{k=0}^n\left[\frac{1}{m^kk!}\sum_{j=0}^k(-1)^j\binom{k}{j}(m(k-j)+r)^n\right](x)_ky^k\\
&=&\sum_{j=0}^{\infty}\sum_{k=j}^{\infty}\frac{(-1)^j(m(k-j)+r)^n(x)_ky^k}{m^kj!(k-j)!}.
\end{eqnarray*}
Letting $i=k-j$ and since $(x)_{i+j}=(x)_i(x-i)_j$,
\begin{eqnarray*}
D_{m,r}(n;x,y)&=&\sum_{i=0}^{\infty}\frac{(mi+r)^n(x)_i}{i!}\left(\frac{y}{m}\right)^i\sum_{j=0}^{\infty}\frac{(-y)^j(x-i)_j}{m^jj!}\\
&=&\sum_{i=0}^{\infty}\binom{x}{i}\left(\frac{y}{m}\right)^i(mi+r)^n\sum_{j=0}^{\infty}\binom{x-i}{j}\left(\frac{-y}{m}\right)^i
\end{eqnarray*}
which simplifies into \eqref{res2}.
\end{proof}
Similar formulas for the bivariate Bell and $r$-Bell polynomials can be obtained directly from this theorem.
\begin{corollary}
The bivariate Bell and $r$-Bell polynomials satisfy the following explicit formulas:
\begin{equation}
D_{1,r}(n;x,y):=B_{n,r}(x,y)=\sum_{i=0}^x\binom{x}{i}(i+r)^ny^i(1-y)^{x-i}
\end{equation}
\begin{equation}
D_{1,0}(n;x,y):=B_n(x,y)=\sum_{i=0}^x\binom{x}{i}i^ny^i(1-y)^{x-i}.
\end{equation}
\end{corollary}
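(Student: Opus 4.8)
The plan is to obtain both identities as immediate specializations of the explicit formula \eqref{res2} established in the preceding theorem. First I would set $m=1$ in \eqref{res2}, whereupon the three ingredients collapse in the obvious way: $(mi+r)^n$ becomes $(i+r)^n$, the factor $(y/m)^i$ becomes $y^i$, and $(1-y/m)^{x-i}$ becomes $(1-y)^{x-i}$. This directly yields
\[
D_{1,r}(n;x,y)=\sum_{i=0}^x\binom{x}{i}(i+r)^ny^i(1-y)^{x-i},
\]
which is the first claimed formula. Setting $r=0$ as well then replaces $(i+r)^n$ by $i^n$ and produces the second formula, so no separate argument is needed for it.

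The only other point is to justify the identifications $D_{1,r}(n;x,y)=B_{n,r}(x,y)$ and $D_{1,0}(n;x,y)=B_n(x,y)$ written in the statement. These follow by comparing the defining relation \eqref{xydowling1} with \eqref{b4} and \eqref{b1.2}: since all three expansions run over the same basis $(x)_ky^k$, the identifications reduce to the coefficient identities $W_{1,r}(n,k)=\sstirling{n+r}{k+r}_r$ and $W_{1,0}(n,k)=\sstirling{n}{k}$. The latter is already recorded in the introduction, and the former is read off from the explicit formula \eqref{rw2} with $m=1$, which reproduces exactly the standard explicit formula for the $r$-Stirling numbers of the second kind.

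Since every step is a substitution into an already-proved identity, I do not expect a genuine obstacle here. The only place calling for a word of care is the coefficient identity $W_{1,r}(n,k)=\sstirling{n+r}{k+r}_r$, which legitimizes writing $D_{1,r}(n;x,y)$ as $B_{n,r}(x,y)$; but this is standard and drops out of \eqref{rw2} at once.
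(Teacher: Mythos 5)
Your proposal is correct and follows exactly the paper's route: the paper derives this corollary directly from the explicit formula \eqref{res2} by the specializations $m=1$ and then $r=0$, which is precisely what you do. Your extra care in verifying the identifications $W_{1,r}(n,k)=\sstirling{n+r}{k+r}_r$ and $W_{1,0}(n,k)=\sstirling{n}{k}$ is a harmless (and welcome) elaboration of what the paper treats as notational convention.
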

Before proceeding, we first cite the binomial inversion formula given by
\begin{equation*}
f_n=\sum_{j=0}^n\binom{n}{j}g_j\Longleftrightarrow g_n=\sum_{j=0}^n(-1)^{n-j}\binom{n}{j}f_j.
\end{equation*}
This identity will be used in the proof of the next theorem.
\begin{theorem}
The bivariate $r$-Dowling polynomials satisfy the following recurrence relations:
\begin{equation}
D_{m,r+1}(n;x,y)=\sum_{j=0}^n\binom{n}{j}D_{m,r}(j;x,y)\label{res3.1}
\end{equation}
\begin{equation}
D_{m,r}(n;x,y)=\sum_{j=0}^n(-1)^{n-j}\binom{n}{j}D_{m,r+1}(j;x,y).\label{res3.2}
\end{equation}
\end{theorem}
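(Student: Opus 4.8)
The plan is to derive \eqref{res3.1} from the exponential generating function \eqref{res1}, and then to obtain \eqref{res3.2} as an immediate consequence via the binomial inversion formula quoted just above the theorem.

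For the first relation, I would set $G_r(t)=\sum_{n=0}^{\infty}D_{m,r}(n;x,y)\frac{t^n}{n!}$. By \eqref{res1} we have $G_r(t)=e^{rt}\left[1+\frac{y(e^{mt}-1)}{m}\right]^x$, and the only factor depending on $r$ is $e^{rt}$. Since $e^{(r+1)t}=e^t\,e^{rt}$, it follows at once that
\begin{equation*}
G_{r+1}(t)=e^t\,G_r(t).
\end{equation*}
This single structural observation is the heart of the argument. Expanding $e^t=\sum_{j=0}^{\infty}\frac{t^j}{j!}$ and forming the Cauchy product of exponential generating functions, the coefficient of $t^n/n!$ in $e^t\,G_r(t)$ is the binomial convolution $\sum_{j=0}^n\binom{n}{j}D_{m,r}(j;x,y)$. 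Comparing this with the coefficient of $t^n/n!$ in $G_{r+1}(t)$, namely $D_{m,r+1}(n;x,y)$, yields \eqref{res3.1}.

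Finally, \eqref{res3.2} drops out by applying the cited binomial inversion formula to \eqref{res3.1}: taking $f_n=D_{m,r+1}(n;x,y)$ and $g_j=D_{m,r}(j;x,y)$, relation \eqref{res3.1} is exactly $f_n=\sum_{j=0}^n\binom{n}{j}g_j$, so inversion gives $g_n=\sum_{j=0}^n(-1)^{n-j}\binom{n}{j}f_j$, which is precisely \eqref{res3.2}.

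As for the difficulty, there is essentially no obstacle once the factorization $G_{r+1}(t)=e^t\,G_r(t)$ is noticed; the remainder is routine. The only point requiring a little care is matching the product of exponential generating functions with the binomial coefficients while keeping the $1/n!$ normalization straight, but this is entirely standard. An alternative, purely algebraic route would be to substitute the explicit formula \eqref{res2} and use the identity $\sum_{j=0}^n\binom{n}{j}(mi+r)^j=(mi+r+1)^n$ term by term, but the generating-function argument is shorter and avoids manipulating the inner binomial sum.
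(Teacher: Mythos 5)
Your proof is correct, but it takes a different route from the paper. The paper works at the level of coefficients: it cites the vertical recurrence relation for the $r$-Whitney numbers from Cheon and Jung, namely $W_{m,r+1}(n,k)=\sum_{j=k}^n\binom{n}{j}W_{m,r}(j,k)$, multiplies both sides by $(x)_k y^k$, and sums over $k$ to lift the identity to the bivariate polynomials; the second relation \eqref{res3.2} then follows by the same binomial inversion you use. Your argument instead works at the level of the exponential generating function \eqref{res1}, observing that the only $r$-dependence sits in the factor $e^{rt}$, so that $G_{r+1}(t)=e^t G_r(t)$, and then reading off \eqref{res3.1} from the binomial convolution rule for products of exponential generating functions. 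What the paper's approach buys is independence from the generating function: it only needs a known recurrence for $W_{m,r}(n,k)$ and elementary summation, with no question about the validity of \eqref{res1} (whose derivation implicitly treats $x$ as a nonnegative integer). What your approach buys is self-containedness within the paper and transparency: the recurrence is seen to be forced purely by the factorization $e^{(r+1)t}=e^t e^{rt}$, with no external citation needed, and the same one-line observation would prove the more general shift $D_{m,r+s}$ in terms of $D_{m,r}$ with $s^{n-j}\binom{n}{j}$ weights. Your remark about the alternative route through the explicit formula \eqref{res2} and the identity $\sum_{j=0}^n\binom{n}{j}(mi+r)^j=(mi+r+1)^n$ is also sound, and is in spirit closest to the paper's coefficient-level manipulation.
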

\begin{proof}
From \cite[Corollary 3.5]{Cheon}, the $r$-Whitney numbers of the second kind satisfy the vertical recurrence relation
\begin{equation*}
W_{m,r+1}(n,k)=\sum_{j=k}^n\binom{n}{j}W_{m,r}(j,k).
\end{equation*}
Multiplying both sides by $(x)_ky^k$ and summing over $k$ yields
\begin{equation*}
\sum_{k=0}^nW_{m,r+1}(n,k)(x)_ky^k=\sum_{j=0}^n\binom{n}{j}\sum_{k=0}^jW_{m,r}(j,k)(x)_ky^k.
\end{equation*}
Thus, by \eqref{xydowling1}, we get \eqref{res3.1}. Moreover, with $f_n=D_{m,r+1}(n;x,y)$ and $g_j=D_{m,r}(j;x,y)$, \eqref{res3.2} is obtained by using the binomial inversion formula. This completes the proof.
\end{proof}
The next corollary is obvious.
\begin{corollary}
The bivariate $r$-Bell polynomials satisfy the following recurrence relations:
\begin{equation}
B_{n,r+1}(x,y)=\sum_{j=0}^n\binom{n}{j}B_{j,r}(x,y)\label{res3.3}
\end{equation}
\begin{equation}
B_{n,r}(x,y)=\sum_{j=0}^n(-1)^{n-j}\binom{n}{j}B_{j,r+1}(x,y).\label{res3.4}
\end{equation}
\end{corollary}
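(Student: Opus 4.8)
The plan is to obtain both identities as the $m=1$ specialization of the recurrence relations \eqref{res3.1} and \eqref{res3.2} established in the preceding theorem. Recall from the corollary following the explicit formula \eqref{res2} that the bivariate $r$-Bell polynomials are precisely the $m=1$ instance of the bivariate $r$-Dowling polynomials, namely $B_{n,r}(x,y)=D_{1,r}(n;x,y)$. Consequently, the bivariate $r$-Dowling recurrences already contain the desired $r$-Bell recurrences as a particular case, and no new argument is required.

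Concretely, I would first set $m=1$ throughout \eqref{res3.1}. The left-hand side $D_{1,r+1}(n;x,y)$ becomes $B_{n,r+1}(x,y)$, while each summand $\binom{n}{j}D_{1,r}(j;x,y)$ becomes $\binom{n}{j}B_{j,r}(x,y)$, yielding \eqref{res3.3} immediately. I would then repeat the substitution $m=1$ in \eqref{res3.2}: the term $D_{1,r}(n;x,y)$ on the left becomes $B_{n,r}(x,y)$ and each $(-1)^{n-j}\binom{n}{j}D_{1,r+1}(j;x,y)$ on the right becomes $(-1)^{n-j}\binom{n}{j}B_{j,r+1}(x,y)$, giving \eqref{res3.4}. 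Equivalently, \eqref{res3.4} can be viewed as the binomial inversion of \eqref{res3.3}, exactly as \eqref{res3.2} is the inversion of \eqref{res3.1}.

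There is no genuine obstacle here; the only point requiring care is the bookkeeping of the identification $B_{n,r}(x,y)=D_{1,r}(n;x,y)$, after which the result is a direct reading-off of the more general theorem. This is why the statement is flagged as obvious, and a one-line proof citing the previous theorem with $m=1$ suffices.
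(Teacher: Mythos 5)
Your proposal is correct and matches the paper exactly: the paper simply declares this corollary obvious as the $m=1$ specialization of the preceding theorem's recurrences \eqref{res3.1} and \eqref{res3.2}, using the identification $B_{n,r}(x,y)=D_{1,r}(n;x,y)$, which is precisely what you spell out.
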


Mez\H{o} \cite[Theorem 3.2]{Mez1} established the ordinary generating function of the $r$-Bell polynomials as
\begin{equation}
\sum_{n=0}^{\infty}B_{n,r}(x)t^n=\frac{-1}{rt-1}\cdot\frac{1}{e^x}\cdot{_1}{F}{_1}\left(\left.\begin{tabular}{llll}$\;\,\frac{rt-1}{t}$\\$\frac{rt+t-1}{t}$\end{tabular}\right|x\right),
\end{equation}
where
\begin{equation}
{_p}{F}{_q}\left(\left.\begin{tabular}{llll}$a_1,$&$a_2,$&$\dots,$&$a_p$\\$b_1,$&$b_2$,&$\dots,$&$b_q$\end{tabular}\right|t\right)=\sum_{k=0}^\infty\frac{\langle a_1\rangle_k\langle a_2\rangle_k\cdots\langle a_p\rangle_k}{\langle b_1\rangle_k\langle b_2\rangle_k\cdots\langle b_q\rangle_k}\frac{t^k}{k!},\label{hyper}
\end{equation}
is the hypergeometric function. This formula was then generalized in the papers of Corcino and Corcino \cite[Theorem 4.1]{Cor2} and Mangontarum et al. \cite[Theorem 39]{Mahid2}. Since the generating function in \cite[pp. 2339]{Cheon} can be written as
\begin{equation*}
\sum_{n=0}^{\infty}W_{m,r}(n,k)t^n=\frac{1}{m^k(1-rt)}\cdot\frac{(-1)^k}{\langle \frac{(m+r)t-1}{mt}\rangle_k},
\end{equation*}
then
\begin{equation*}
\sum_{k=0}^{\infty}\left(\sum_{n=0}^{\infty}W_{m,r}(n,k)t^n\right)(x)_ky^k=\frac{1}{1-rt}\sum_{k=0}^{\infty}\frac{\langle -x\rangle_k\langle 1\rangle_k}{\langle \frac{(m+r)t-1}{mt}\rangle_k}\left(\frac{y}{m}\right)^k\frac{1}{k!}.
\end{equation*}
By \eqref{hyper},
\begin{equation*}
\sum_{n=0}^{\infty}\left(\sum_{k=0}^{\infty}W_{m,r}(n,k)(x)_ky^k\right)t^n=\frac{1}{1-rt}\cdot{_2}{F}{_1}\left(\left.\begin{tabular}{llll}$-x,\;\, 1$\\$\frac{(m+r)t-1}{mt}$\end{tabular}\right|\frac{y}{m}\right)
\end{equation*}
and the next theorem follows by applying the formula \cite[pp. 559]{Abra}
\begin{equation*}
(1-t)^{-b}{_2}{F}{_1}\left(\left.\begin{tabular}{llll}$b,\;\, c-a$\\$\;\;\;\,c$\end{tabular}\right|\frac{t}{t-1}\right)={_2}{F}{_1}\left(\left.\begin{tabular}{llll}$a,\;\, b$\\$\;\;\,c$\end{tabular}\right|t\right)
\end{equation*}
with $c=\frac{(m+r)t-1}{mt}$, $a=\frac{rt-1}{mt}$ and $b=-x$.
\begin{theorem}
The bivariate $r$-Dowling polynomials have the following ordinary generating function:
\begin{equation}
\sum_{n=0}^nD_{m,r}(n;x,y)t^k=\frac{1}{1-rt}\left(\frac{m-y}{m}\right)^x{_2}{F}{_1}\left(\left.\begin{tabular}{llll}$\frac{rt-1}{mt},\;\, -x$\\$\;\,\frac{(m+r)t-1}{mt}$\end{tabular}\right|\frac{y}{y-m}\right).
\end{equation}
\end{theorem}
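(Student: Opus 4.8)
The plan is to assemble the ordinary generating function from the one already recorded for the $r$-Whitney numbers and then to recognize the resulting series as a single Gauss ${}_2F_1$, on which the quoted linear (Pfaff-type) transformation can be applied. Concretely, I would start from $\sum_{n=0}^{\infty}W_{m,r}(n,k)t^n=\frac{(-1)^k}{m^k(1-rt)}\cdot\frac{1}{\langle c\rangle_k}$ with $c=\frac{(m+r)t-1}{mt}$, multiply through by $(x)_ky^k$, and sum over $k$. Interchanging the order of summation (legitimate for $t$ near $0$) gives $\sum_{n\ge0}D_{m,r}(n;x,y)t^n=\frac{1}{1-rt}\sum_{k\ge0}\frac{(-1)^k(x)_k}{\langle c\rangle_k}(y/m)^k$, which is the starting point for the hypergeometric identification.

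To turn this into a ${}_2F_1$ I would invoke the two Pochhammer identities $(x)_k=(-1)^k\langle -x\rangle_k$ and $\langle 1\rangle_k=k!$. The first cancels the sign $(-1)^k$ and installs the numerator parameter $-x$; the second lets me insert $\langle 1\rangle_k/k!=1$ to supply both the second numerator parameter $1$ and the required $1/k!$. This reproduces the intermediate formula displayed just before the theorem, namely $\sum_{n\ge0}D_{m,r}(n;x,y)t^n=\frac{1}{1-rt}\,{}_2F_1(-x,1;c;y/m)$.

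The final step is to read the cited transformation $(1-z)^{-b}\,{}_2F_1(b,c-a;c;\tfrac{z}{z-1})={}_2F_1(a,b;c;z)$ with the assignments $b=-x$, $a=\frac{rt-1}{mt}$, $c=\frac{(m+r)t-1}{mt}$. The decisive algebraic check is $c-a=\frac{(m+r)t-1-(rt-1)}{mt}=1$, so the inner function ${}_2F_1(b,c-a;c;\cdot)={}_2F_1(-x,1;c;\cdot)$ is exactly the series produced above, provided its argument equals $y/m$. Solving $\frac{z}{z-1}=\frac{y}{m}$ yields $z=\frac{y}{y-m}$, the argument in the claimed formula, and since $1-z=\frac{m}{m-y}$ the prefactor is $(1-z)^{-b}=(1-z)^{x}=(\frac{m}{m-y})^{x}=(\frac{m-y}{m})^{-x}$. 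Rearranging the identity to isolate ${}_2F_1(-x,1;c;y/m)$ moves this factor to the opposite side as $(\frac{m-y}{m})^{x}$, and multiplying by $\frac{1}{1-rt}$ delivers the stated closed form with numerator parameters $\frac{rt-1}{mt},\,-x$, denominator $\frac{(m+r)t-1}{mt}$, and argument $\frac{y}{y-m}$.

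The hard part is purely the bookkeeping in this last step: one must fix $a,b,c$ so that simultaneously $c-a=1$ (so the transformed second parameter coincides with the $1$ already present) and the Möbius substitution $z\mapsto z/(z-1)$ carries the new argument $y/(y-m)$ back to $y/m$, all while correctly tracking the base and exponent of the algebraic factor $(1-z)^{-b}$. Once these three conditions are verified to be mutually consistent, the identity follows immediately; no analytic convergence issue arises, since $x$ is a non-negative integer, the top parameter $-x$ is a non-positive integer, and the hypergeometric series terminates, so the whole computation is a finite polynomial identity.
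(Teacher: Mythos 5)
Your proposal is correct and follows essentially the same route as the paper: the same ordinary generating function for $W_{m,r}(n,k)$, the same Pochhammer manipulations $(x)_k=(-1)^k\langle -x\rangle_k$ and $\langle 1\rangle_k=k!$ to reach $\frac{1}{1-rt}\,{_2}{F}{_1}\bigl(-x,1;\tfrac{(m+r)t-1}{mt};\tfrac{y}{m}\bigr)$, and the same Pfaff-type transformation with $b=-x$, $a=\tfrac{rt-1}{mt}$, $c=\tfrac{(m+r)t-1}{mt}$. Your explicit verification that $c-a=1$, that $z/(z-1)=y/m$ forces $z=y/(y-m)$, and that the series terminates because $-x$ is a non-positive integer only makes explicit the bookkeeping the paper leaves implicit.
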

This yields similar generating functions for $B_n(x,y)$ and $B_{n,r}(x,y)$.
\begin{corollary}
The bivariate Bell and $r$-bell polynomials have the following ordinary generating functions:
\begin{equation}
\sum_{n=0}^nB_{n,r}(x,y)t^k=\frac{(1-y)^x}{1-rt}{_2}{F}{_1}\left(\left.\begin{tabular}{llll}$\frac{rt-1}{t},\;\, -x$\\$\;\,\frac{(1+r)t-1}{t}$\end{tabular}\right|\frac{y}{y-1}\right)
\end{equation}
\begin{equation}
\sum_{n=0}^nB_n(x,y)t^k=(1-y)^x{_2}{F}{_1}\left(\left.\begin{tabular}{llll}$-\frac{1}{t},\;\, -x$\\$\;\;\;\,\frac{t-1}{t}$\end{tabular}\right|\frac{y}{y-1}\right).
\end{equation}
\end{corollary}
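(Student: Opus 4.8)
The plan is to obtain both generating functions purely by specializing the parameter $m$ (and then $r$) in the preceding theorem. First I would recall the identifications already recorded in the excerpt: setting $m=1$ in the definition \eqref{xydowling1} gives $D_{1,r}(n;x,y)=B_{n,r}(x,y)$, and taking in addition $r=0$ gives $D_{1,0}(n;x,y)=B_n(x,y)$. Consequently both ordinary generating functions in the corollary are literal instances of the one established in the theorem, and no independent argument is needed.

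For the first identity I would substitute $m=1$ directly into the theorem and simplify each constituent of the right-hand side. The prefactor $\left(\frac{m-y}{m}\right)^x$ becomes $(1-y)^x$; the upper hypergeometric parameter $\frac{rt-1}{mt}$ becomes $\frac{rt-1}{t}$; the lower parameter $\frac{(m+r)t-1}{mt}$ becomes $\frac{(1+r)t-1}{t}$; the argument $\frac{y}{y-m}$ becomes $\frac{y}{y-1}$; and the factor $\frac{1}{1-rt}$ is unchanged. Assembling these yields the stated formula for $B_{n,r}(x,y)$ verbatim.

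For the second identity I would further set $r=0$ in the formula just obtained (equivalently, take $m=1$ and $r=0$ in the theorem). Then $\frac{1}{1-rt}=1$, the upper parameter $\frac{rt-1}{t}$ collapses to $-\frac{1}{t}$, and the lower parameter $\frac{(1+r)t-1}{t}$ collapses to $\frac{t-1}{t}$, while the prefactor $(1-y)^x$ and the argument $\frac{y}{y-1}$ are retained. This gives exactly the generating function claimed for $B_n(x,y)$.

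The computation is entirely routine substitution, so the only thing to verify is bookkeeping: that each parameter of the ${}_2F_1$ simplifies as asserted when $m=1$, and that the upper and lower parameters stay distinct so that no degenerate cancellation in \eqref{hyper} occurs. Since the theorem already secured the convergence and the hypergeometric transformation used in its derivation for arbitrary $m$, restricting to $m=1$ (and then $r=0$) needs no further justification, and there is no genuine obstacle beyond this check.
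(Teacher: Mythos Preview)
Your proposal is correct and matches the paper's approach: the paper does not supply a separate proof but simply states that the theorem ``yields similar generating functions for $B_n(x,y)$ and $B_{n,r}(x,y)$,'' i.e., the corollary is obtained by the very specializations $m=1$ and $m=1,\,r=0$ that you carry out. Your bookkeeping of the prefactor, the two hypergeometric parameters, and the argument under these substitutions is accurate.
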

Let $v_k$, $k=0,1,2,\ldots$, be a sequence of real numbers. $v_k$ is convex \cite[pp. 268]{Comt} on an interval $[a,b]$, where $[a,b]$ contains at least three consecutive integers, if
\begin{equation*}
v_k\leq \frac{1}{2}\left(v_{k-1}+v_{k+1}\right),\ k\in[a+1,b-1].
\end{equation*}
This is called convexity property. Corcino and Corcino \cite[Theorem 2.2]{Cor2} showed that Hsu and Shiue's \cite{Hsu} generalized exponential polynomials obey the convexity property. Special cases can also be seen in \cite[Theorem 42]{Mahid2} and \cite[Theorem 9]{Mahid4}.
\begin{theorem}
The bivariate $r$-Dowling polynomials satisfy the convexity property.
\end{theorem}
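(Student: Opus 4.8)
The plan is to prove convexity with respect to the index $n$; that is, regarding $v_n := D_{m,r}(n;x,y)$ as a sequence, I would show that the second difference
\[
D_{m,r}(n+1;x,y)-2D_{m,r}(n;x,y)+D_{m,r}(n-1;x,y)
\]
is nonnegative for $n\geq 1$. This inequality is exactly equivalent to the convexity condition $D_{m,r}(n;x,y)\leq \tfrac12\bigl[D_{m,r}(n-1;x,y)+D_{m,r}(n+1;x,y)\bigr]$, so it suffices to control the sign of this expression. To keep the coefficients well behaved I would work under the natural hypotheses $m>0$, $r\geq 0$, $x$ a nonnegative integer, and $0\leq y\leq m$.

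The key idea is to invoke the explicit formula \eqref{res2}, which writes
\[
D_{m,r}(n;x,y)=\sum_{i=0}^{x}c_i\,(mi+r)^n,\qquad c_i:=\binom{x}{i}\left(\frac{y}{m}\right)^i\left(1-\frac{y}{m}\right)^{x-i}.
\]
The decisive feature here is that the entire dependence on $n$ is concentrated in the powers $(mi+r)^n$, while the coefficients $c_i$ do not depend on $n$. I would then substitute this representation into the second difference, interchange the finite sum with the difference operator, and reduce the problem to the single-base factorization
\[
(mi+r)^{n+1}-2(mi+r)^n+(mi+r)^{n-1}=(mi+r)^{n-1}\,(mi+r-1)^2,
\]
which is the heart of the argument: the right-hand side is a perfect square $(mi+r-1)^2\geq 0$ multiplied by the power $(mi+r)^{n-1}\geq 0$, the latter being nonnegative since $mi+r\geq 0$ and $n\geq 1$.

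Combining these steps yields
\[
D_{m,r}(n+1;x,y)-2D_{m,r}(n;x,y)+D_{m,r}(n-1;x,y)=\sum_{i=0}^{x}c_i\,(mi+r)^{n-1}(mi+r-1)^2,
\]
and every summand is nonnegative because $c_i\geq 0$ whenever $0\leq y\leq m$, which establishes the convexity. I expect the main obstacle to be not computational but a matter of delimiting hypotheses: the factorization and the perfect-square term are automatic, whereas nonnegativity of each $c_i$ genuinely requires $y/m\in[0,1]$ together with $mi+r\geq 0$. I would therefore state these parameter ranges explicitly in the theorem, since for values outside them the convexity may fail; apart from fixing this domain, the remaining verification is routine.
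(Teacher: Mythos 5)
Your proof is correct and follows essentially the same route as the paper: both rest on the explicit formula \eqref{res2} and the perfect-square inequality $(mi+r-1)^2\geq 0$ applied to the kernel $(mi+r)^n$, with the resulting termwise inequality multiplied by the coefficients $\binom{x}{i}\left(\frac{y}{m}\right)^i\left(1-\frac{y}{m}\right)^{x-i}$ and summed over $i$. The only differences are presentational (you phrase it as a nonnegative second difference, with an index shift), and your explicit observation that the coefficients' nonnegativity requires $0\leq y/m\leq 1$ makes precise a hypothesis the paper uses silently.
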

\begin{proof}
Let $mi+r\geq 0$ so that $(mi+r)^2\geq 0$ and 
$$0\leq1-2(mi+r)+(mi+r)^2.$$
Rewrite this as 
$$mi+r\leq\frac{1}{2}\left[1+(mi+r)^2\right]$$
and multiply $(mi+r)^n$ to both sides to get 
$$(mi+r)^{n+1}\leq\frac{1}{2}\left[(mi+r)^n+(mi+r)^{n+2}\right].$$
Multiplying both sides of this inequality by $\binom{x}{i}\left(\frac{y}{m}\right)^i\left(1-\frac{y}{m}\right)^{x-i}$, summing over $i$ and using \eqref{res2} gives
$$D_{m,r}(n+1;x,y)\leq\frac{1}{2}\left[D_{m,r}(n;x,y)+D_{m,r}(n+2;x,y)\right]$$
which is the desired result.
\end{proof}
\begin{remark}\rm
By assigning suitable values to $m$ and $r$, it can be shown that convexity property is preserved for the cases of both the bivariate Bell and $r$-Bell polynomials.
\end{remark}

\section{Generalized Spivey's formula}

Let $f(x)$ be the exponential generating function of the sequence $\{A_n\}$ given by
\begin{equation*}
\sum_{n=0}^{\infty}A_n\frac{x^n}{n!}=f(x).
\end{equation*}
The exponential generating function of the sequence $\{A_{i+j}\}$ is given by
\begin{equation}
\sum_{i=0}^{\infty}\sum_{j=0}^{\infty}A_{i+j}\frac{x^i}{i!}\frac{y^j}{j!}=f(x+y).\label{dex1}
\end{equation}
Zheng and Li \cite[Equations 7 and 8]{Zheng} used this identity in the derivation of their main results. Adopting the same method they employed in their paper, we present the following theorems:
\begin{theorem}[Generalized Spivey's formula, first form]
The following formulas hold:
\begin{equation}
D_{m,r}(\ell+n;x,y)=\sum_{k=0}^{\ell}\sum_{i=0}^n(mk)^{n-i}\binom{n}{i}W_{m,r}(\ell,k)D_{m,r}(i;x-k,y)(x)_ky^k\label{res4.1}
\end{equation}
\begin{equation}
D_{m,r}(\ell+n)=\sum_{k=0}^{\ell}\sum_{i=0}^n(mk)^{n-i}\binom{n}{i}W_{m,r}(\ell,k)D_{m,r}(i).\label{res4.2}
\end{equation}
\end{theorem}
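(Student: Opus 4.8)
The plan is to follow the generating-function method of Zheng and Li, built on the exponential generating function established in \eqref{res1}. Write $f(t)=e^{rt}\left[1+\frac{y(e^{mt}-1)}{m}\right]^x$, so that $D_{m,r}(n;x,y)$ is precisely the sequence whose EGF is $f$. Applying the shift identity \eqref{dex1} with two fresh variables $s,t$ playing the role of its generic arguments gives
\begin{equation*}
\sum_{\ell=0}^\infty\sum_{n=0}^\infty D_{m,r}(\ell+n;x,y)\frac{s^\ell}{\ell!}\frac{t^n}{n!}=f(s+t)=e^{r(s+t)}\left[1+\frac{y(e^{m(s+t)}-1)}{m}\right]^x,
\end{equation*}
so it suffices to expand the right-hand side as a double series in $s$ and $t$ and read off the coefficient of $\frac{s^\ell}{\ell!}\frac{t^n}{n!}$.

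The crux is to decouple the $s$- and $t$-dependence inside the bracket. I would use the algebraic identity
\begin{equation*}
1+\frac{y(e^{m(s+t)}-1)}{m}=\frac{y(e^{ms}-1)}{m}\,e^{mt}+\left[1+\frac{y(e^{mt}-1)}{m}\right],
\end{equation*}
which is checked by expanding the right side. Raising to the $x$-th power and applying the binomial theorem yields
\begin{equation*}
\left[1+\frac{y(e^{m(s+t)}-1)}{m}\right]^x=\sum_{k=0}^x\binom{x}{k}\left(\frac{y(e^{ms}-1)}{m}\right)^k e^{mkt}\left[1+\frac{y(e^{mt}-1)}{m}\right]^{x-k}.
\end{equation*}
Multiplying by $e^{r(s+t)}=e^{rs}e^{rt}$ and regrouping, the $s$-dependent factor becomes $(x)_ky^k\cdot\frac{e^{rs}}{k!}\left(\frac{e^{ms}-1}{m}\right)^k$, which by \eqref{rw1} equals $(x)_ky^k\sum_\ell W_{m,r}(\ell,k)\frac{s^\ell}{\ell!}$, while the $t$-dependent factor is $e^{mkt}\cdot e^{rt}\left[1+\frac{y(e^{mt}-1)}{m}\right]^{x-k}$, a product of the EGF of $\{(mk)^i\}$ with the EGF of $\{D_{m,r}(i;x-k,y)\}$, the latter obtained from \eqref{res1} with $x$ replaced by $x-k$.

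By the product rule for exponential generating functions, the coefficient of $\frac{t^n}{n!}$ in the $t$-factor is the binomial convolution $\sum_{i=0}^n\binom{n}{i}(mk)^{n-i}D_{m,r}(i;x-k,y)$, and the coefficient of $\frac{s^\ell}{\ell!}$ in the $s$-factor is $(x)_ky^kW_{m,r}(\ell,k)$. Summing over $k$ and equating with $D_{m,r}(\ell+n;x,y)$ produces \eqref{res4.1}; the ranges $0\le k\le\ell$ and $0\le i\le n$ emerge automatically since $W_{m,r}(\ell,k)=0$ for $k>\ell$ and $\binom{n}{i}=0$ for $i>n$. To obtain \eqref{res4.2} I would then replace $y$ by $1/x$ and let $x\to\infty$: since $(x)_k x^{-k}\to1$ and $D_{m,r}(i;x-k,1/x)\to\sum_j W_{m,r}(i,j)=D_{m,r}(i)$, the bivariate identity collapses to the $r$-Dowling-number form. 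The main obstacle is pinning down the factorization that cleanly separates $s$ and $t$ while simultaneously generating the $e^{mkt}$ factor (responsible for the $(mk)^{n-i}$) and the shifted parameter $x-k$ inside $D_{m,r}(i;x-k,y)$; once that identity is in hand, the rest is routine coefficient extraction.
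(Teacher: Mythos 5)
Your proposal is correct and follows essentially the same route as the paper: the decomposition $1+\frac{y(e^{m(s+t)}-1)}{m}=\left[1+\frac{y(e^{mt}-1)}{m}\right]+\frac{y(e^{ms}-1)}{m}e^{mt}$ is exactly the paper's splitting (with $u,v$ in place of your $s,t$), followed by the same binomial-theorem expansion, identification of the three EGF factors via \eqref{rw1} and \eqref{res1}, coefficient extraction, and the same $y\mapsto 1/x$, $x\to\infty$ limit for \eqref{res4.2}. Your write-up is, if anything, slightly more careful than the paper's (which contains minor typos such as $W_{m,r}(n,k)$ where $W_{m,r}(\ell,k)$ is meant).
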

\begin{proof}
According equations \eqref{dex1} and \eqref{res1}, we may write
\begin{equation}
\sum_{\ell=0}^{\infty}\sum_{n=0}^{\infty}D_{m,r}(\ell+n;x,y)\frac{u^{\ell}}{\ell!}\frac{v^n}{n!}=e^{r(u+v)}\left[1+\frac{y(e^{m(u+v)}-1)}{m}\right]^x\label{prf1}
\end{equation}
In the right-hand side,
\begin{equation*}
\left[1+\frac{y(e^{m(u+v)}-1)}{m}\right]^x=\left[1+\frac{y(e^{mv}-1)}{m}+\frac{ye^{mv}(e^{mu}-1)}{m}\right]^x.
\end{equation*}
Hence, by the binomial theorem,
\begin{equation*}
e^{r(u+v)}\left[1+\frac{y(e^{m(u+v)}-1)}{m}\right]^x=e^{r(u+v)}\sum_{k=0}^{\infty}\binom{x}{k}\left[1+\frac{y(e^{mv}-1)}{m}\right]^{x-k}\left[\frac{ye^{mv}(e^{mu}-1)}{m}\right]^k.
\end{equation*}
Again, we apply \eqref{res1} to get
\begin{equation*}
e^{r(u+v)}\left[1+\frac{y(e^{m(u+v)}-1)}{m}\right]^x=\sum_{k=0}^{\infty}(x)_ky^k\frac{e^{ru}(e^{mu}-1)^k}{k!m^k}\sum_{i=0}^{\infty}D_{m,r}(i;x-k,y)\frac{v^i}{i!}\sum_{j=0}^{\infty}\frac{(mvk)^j}{j!}.
\end{equation*}
From \eqref{rw1},
\begin{equation*}
e^{r(u+v)}\left[1+\frac{y(e^{m(u+v)}-1)}{m}\right]^x=\sum_{k=0}^{\infty}\sum_{\ell=k}^{\infty}\sum_{i=0}^{\infty}\sum_{j=0}^{\infty}(x)_ky^kW_{m,r}(n,k)D_{m,r}(i;x-k,y)\frac{u^{\ell}v^{i+j}(mk)^j}{\ell !i!j!}.
\end{equation*}
Reindexing the sums with $i+j=n$, and after a few simplifications 
\begin{eqnarray*}
e^{r(u+v)}\left[1+\frac{y(e^{m(u+v)}-1)}{m}\right]^x&=&\sum_{\ell=0}^{\infty}\left\{\sum_{n=0}^{\infty}\left\{\sum_{k=0}^{\ell}\sum_{i=0}^n(x)_ky^kW_{m,r}(n,k)\right.\right.\\
& &\times \left.\left.D_{m,r}(i;x-k,y)(mk)^{n-i}\binom{n}{i}\right\}\frac{v^n}{n!}\right\}\frac{u^{\ell}}{\ell !}.
\end{eqnarray*}
We arrive at the desired result in \eqref{res4.1} by combining the last equation with \eqref{prf1} and comparing the coefficients of $\frac{v^n}{n!}\cdot\frac{u^{\ell}}{\ell !}$. For \eqref{res4.2}, we simply replace $y$ with $1/x$ and then take the limit as $x\rightarrow\infty$.
\end{proof}

Now, if we make use of the exponential generating function of $B_n(x,y)$ in \eqref{b3} instead on \eqref{res1}, then after applying \eqref{rw1}, \eqref{prf1} becomes
\begin{eqnarray*}
e^{r(u+v)}\left[1+\frac{y(e^{m(u+v)}-1)}{m}\right]^x&=&\sum_{k=0}^{\infty}\sum_{\ell=k}^{\infty}\sum_{i=0}^{\infty}\sum_{j=0}^{\infty}(x)_ky^kW_{m,r}(n,k)\\
& &\times B_i\left(x-k,\frac{y}{m}\right)\frac{u^{\ell}m^iv^{i+j}(mk+r)^j}{\ell !i!j!}.
\end{eqnarray*}
Therefore, we can directly deduce the following from the previous theorem:
\begin{theorem}[Generalized Spivey's formula, second form]
The following formulas hold:
\begin{equation}
D_{m,r}(\ell+n;x,y)=\sum_{k=0}^{\ell}\sum_{i=0}^n(mk+r)^{n-i}\binom{n}{i}W_{m,r}(\ell,k)B_i\left(x-k,\frac{y}{m}\right)(x)_ky^k\label{res5.1}
\end{equation}
\begin{equation}
D_{m,r}(\ell+n)=\sum_{k=0}^{\ell}\sum_{i=0}^n(mk+r)^{n-i}\binom{n}{i}W_{m,r}(\ell,k)B_i\left(\frac{1}{m}\right).\label{res5.2}
\end{equation}
\end{theorem}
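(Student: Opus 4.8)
The plan is to imitate the proof of the first form almost verbatim, changing only how the ``$x-k$'' power factor on the right-hand side of \eqref{prf1} is expanded. Starting again from the double exponential generating function \eqref{prf1} and the decomposition
$$1+\frac{y(e^{m(u+v)}-1)}{m}=\left[1+\frac{y(e^{mv}-1)}{m}\right]+\frac{ye^{mv}(e^{mu}-1)}{m},$$
I would apply the binomial theorem in the exponent $x$ to peel off the $k$-th power of the second summand, exactly as before. This isolates a $u$-dependent factor $\frac{e^{ru}(e^{mu}-1)^k}{k!m^k}$, which by \eqref{rw1} generates $W_{m,r}(\ell,k)$, together with the prefactor $(x)_ky^k$ and a $v$-dependent factor
$$e^{rv}\left[1+\frac{y(e^{mv}-1)}{m}\right]^{x-k}e^{mvk}.$$

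The single change from the first form is the interpretation of this $v$-factor. Instead of grouping $e^{rv}$ with the bracket to recover the $r$-Dowling generating function \eqref{res1}, I would expand the bracket alone through the bivariate Bell generating function \eqref{b3}: setting $t=mv$ and replacing $y$ by $y/m$ gives
$$\left[1+\frac{y}{m}(e^{mv}-1)\right]^{x-k}=\sum_{i=0}^{\infty}B_i\!\left(x-k,\frac{y}{m}\right)\frac{(mv)^i}{i!}.$$
The surviving exponentials then combine as $e^{rv}e^{mvk}=e^{(mk+r)v}$, whose expansion supplies the factor $(mk+r)^j$. Forming the Cauchy product of these two $v$-series, reindexing with $i+j=n$, and reading off the coefficient of $\frac{u^\ell}{\ell!}\frac{v^n}{n!}$ against the left-hand side of \eqref{prf1} yields \eqref{res5.1}, with $(mk+r)^{n-i}$ now in place of the $(mk)^{n-i}$ of the first form and $B_i(x-k,y/m)$ in place of $D_{m,r}(i;x-k,y)$.

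Finally, I would deduce the numerical identity \eqref{res5.2} from \eqref{res5.1} by the same specialization used for \eqref{res4.2}: replace $y$ with $1/x$ and let $x\to\infty$. Here $(x)_ky^k=(x)_k/x^k\to1$, while the limit of the bivariate Bell polynomial, $B_i\!\left(x-k,\frac{1}{mx}\right)\to B_i\!\left(\frac{1}{m}\right)$, follows from \eqref{b3} since $\left[1+\frac{c}{x}(e^t-1)\right]^x\to e^{c(e^t-1)}$ as $x\to\infty$ (with $c=1/m$). I expect the main obstacle to be bookkeeping rather than conceptual: one must track the powers of $m$ introduced by the substitution $t=mv$ in \eqref{b3} with care, and then justify the termwise passage to the limit in the last step so that the factors $W_{m,r}(\ell,k)$ and $(mk+r)^{n-i}$ are carried through unchanged.
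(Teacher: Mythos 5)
Your outline follows the paper's own proof of this theorem step for step: the same decomposition of the right-hand side of \eqref{prf1}, the same extraction of the $u$-factor $e^{ru}(e^{mu}-1)^k/(k!m^k)$ via \eqref{rw1} to produce $W_{m,r}(\ell,k)(x)_ky^k$, the same expansion of the remaining bracket through \eqref{b3} with $t=mv$ and $y\mapsto y/m$, the same merging $e^{rv}e^{mkv}=e^{(mk+r)v}$, and the same specialization $y=1/x$, $x\to\infty$ for the numerical version (your limit computations there are correct).

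However, the ``bookkeeping'' you postpone is precisely where the conclusion fails. The expansion you quote,
\[
\left[1+\frac{y}{m}(e^{mv}-1)\right]^{x-k}=\sum_{i=0}^{\infty}B_i\!\left(x-k,\frac{y}{m}\right)\frac{(mv)^i}{i!},
\]
contributes a factor $m^{i}$ to the coefficient of $v^i/i!$, so the Cauchy product with $e^{(mk+r)v}$ and the comparison of coefficients of $\frac{u^{\ell}}{\ell!}\frac{v^n}{n!}$ actually give
\[
D_{m,r}(\ell+n;x,y)=\sum_{k=0}^{\ell}\sum_{i=0}^{n}m^{i}(mk+r)^{n-i}\binom{n}{i}W_{m,r}(\ell,k)B_i\!\left(x-k,\frac{y}{m}\right)(x)_ky^k,
\]
not \eqref{res5.1}: the factor $m^i$ does not cancel. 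Indeed, \eqref{res5.1} as stated is false for $m\neq 1$; with $\ell=0$ and $n=1$ its left-hand side is $D_{m,r}(1;x,y)=r+xy$, while its right-hand side is $r+B_1(x,y/m)=r+xy/m$. The same defect occurs in the paper itself: the displayed intermediate formula there carries the factor $m^i$, which then disappears without justification in the theorem statement. So your proposal faithfully reproduces the paper's argument, including its flaw, and your final sentence claiming to read off \eqref{res5.1} is the step that would fail. To be correct, you should conclude with the identity displayed above, and correspondingly
\[
D_{m,r}(\ell+n)=\sum_{k=0}^{\ell}\sum_{i=0}^{n}m^{i}(mk+r)^{n-i}\binom{n}{i}W_{m,r}(\ell,k)B_i\!\left(\frac{1}{m}\right),
\]
which can be tidied by noting $m^iB_i(1/m)=\sum_j m^{i-j}\sstirling{i}{j}=D_{m,0}(i)$, so that the second form expresses $D_{m,r}(\ell+n)$ in terms of the ordinary ($r=0$) Dowling numbers, in analogy with \eqref{spiveyr2}.
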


\section{Conclusion}

It is easy to see that when $m=1$, we recover from equations \eqref{res4.1} and \eqref{res5.1} Zheng and Li's \cite{Zheng} identities in \eqref{xyspiveyr1} and \eqref{xyspiveyr2}, respectively. On the other hand, equations \eqref{res4.2} and \eqref{res5.2} are both generalizations of Spivey's formula since the two equations reduce to \eqref{spivey} when $m=1$ and $r=0$. Finally, observe that when $n=0$ in \eqref{res4.2}, we get
\begin{equation*}
D_{m,r}(\ell;x,y)=\sum_{k=0}^{\ell}W_{m,r}(n,k)(x)_ky^k,
\end{equation*}
exactly the defining relation in \eqref{xydowling1}; and when $\ell=1$ in the same equation, we have the recurrence relation
\begin{equation}
D_{m,r}(n+1;x,y)=\sum_{i=0}^nm^{n-i}\binom{n}{i}D_{m,r}(i;x-1,y)xy.
\end{equation}
This scenario is very similar to how Spivey's formula generalizes both equations \eqref{b1} and \eqref{b2} as mentioned earlier in this paper. When $m=1$ and $r=0$, this results to
\begin{equation}
B_{n+1}(x,y)=\sum_{i=0}^n\binom{n}{i}B_i(x-1)xy,
\end{equation}
the bivariate extension of \eqref{b2}.

\end{document}